\documentclass{article}
\usepackage{amsfonts}
\usepackage{amsmath}
\usepackage{amsthm}
\usepackage{amssymb}
\usepackage[all]{xy}
\usepackage{url}

\def\@begintheorem#1#2{\par\bgroup{\sc #1\ #2. }\it\ignorespaces}
\def\@opargbegintheorem#1#2#3{\par\bgroup{\sc #1\ #2\ (#3). } \it\ignorespaces}
\def\@endtheorem{\egroup}
\newtheorem{theorem}{Theorem}[section]
\newtheorem{corollary}[theorem]{Corollary}

\newtheorem{problem}[theorem]{Problem}
\newtheorem{example}[theorem]{Example}
\newtheorem{remark}[theorem]{Remark}

\newtheorem{definition}[theorem]{Definition}

\newtheorem{question}[theorem]{Question}

\def\dist{\operatorname{dist}}

\begin{document}

\title{Polyhedral graph abstractions and an approach to the Linear Hirsch Conjecture}
\author{Edward D. Kim\thanks{Supported by Vidi grant 639.032.917 from the Netherlands Organization for Scientific Research (NWO)}\\{\sl Technische Universiteit Delft}}
\date{}

\maketitle

\begin{abstract}
We introduce a new combinatorial abstraction for the graphs of polyhedra. The new abstraction is a flexible framework defined by combinatorial properties, with each collection of properties taken providing a variant for studying the diameters of polyhedral graphs. One particular variant has a diameter which satisfies the best known upper bound on the diameters of polyhedra. Another variant has superlinear asymptotic diameter, and together with some combinatorial operations, gives a concrete approach for disproving the Linear Hirsch Conjecture.
\end{abstract}

\noindent{\small {\bf MSC:} 05C12, 52B05, 52B40, 90C05\\
{\bf Keywords:} convex polytopes, Hirsch Conjecture, linear programming}

\section{Introduction}\label{section:introduction}

Studying the diameters of the graphs of polytopes and polyhedra has received a lot of attention~\cite{kalaiblog:polymath3part6} due to Santos' recent counter-example~\cite{Santos:CounterexampleHirsch} to the Hirsch Conjecture. The Hirsch Conjecture asserts that the diameter of any $d$-dimensional polytope with $n$ facets is never greater than $n-d$. Since this conjecture is now known to be false, the question of the Polynomial Hirsch Conjecture, which asserts that the diameter of any polytope with $n$ facets is polynomial in $n$, is relevant. (The dimension $d$ is not in the statement of the Polynomial Hirsch Conjecture since $n > d$.) The first step in this line of investigation is to settle the Linear Hirsch Conjecture, which asserts that the diameter is linear in the number $n$ of facets, independent of the dimension $d$.

The original Hirsch Conjecture was stated for the graphs of polyhedra. Since Klee and Walkup~\cite{Klee:d-step} showed that the Hirsch Conjecture is false for unbounded polyhedra, the Hirsch Conjecture (and the Linear and Polynomial Hirsch Conjectures, which are both still open) is usually stated for bounded polytopes. However, we should note that the Linear Hirsch or Polynomial Hirsch Conjectures would still be interesting for unbounded polyhedra.

These conjectures on the diameters of polytopes and polyhedra are interesting because of their relation to the efficiency of the simplex algorithm for linear programming. In particular, the diameter of the feasibility polyhedron is a lower bound on the number of pivot steps needed for the simplex algorithm. Thus, if the Polynomial Hirsch Conjecture is false, then no pivot rule for the simplex algorithm runs in polynomial time. For more information on the Hirsch Conjecture and its relationship to the behavior of the simplex method, see the survey~\cite{Klee:The-d-step-conjecture} or the recent survey~\cite{KimSantos:HirschSurvey} written jointly with Santos. Our terminology on polytopes follows the language in~\cite{Ziegler:Lectures}.

The study of diameters of convex polyhedra via combinatorial abstractions of polyhedral graphs was considered by Adler, Dantzig, Murty, and Saigal~\cite{Adler:AbstractPolytopesThesis, AdlerDantzigMurty:AbstractPolytopes, Adler:MaxDiamAbsPoly, AdlerSaigal:LongPathsAbstract, Murty:GraphAbstract}, Kalai~\cite{Kalai:DiameterHeight}, and Eisenbrand, H\"ahnle, Razborov, and Rothvo\ss{}~\cite{Eisenbrand:Diameter-of-Polyhedra, Hahnle:Diplomathesis}. Adler et al.{} introduced the first formally-defined combinatorial abstraction of the graphs of polytopes satisfying a collection of axioms. In~\cite{Kalai:DiameterHeight}, Kalai showed that a more general family of objects obtained by removing one of these axioms still satisfies the quasi-polynomial upper bound for the diameters of convex polyhedra proved in~\cite{Kalai:Quasi-polynomial}. A further generalization of polyhedral graphs was studied in~\cite{Eisenbrand:Diameter-of-Polyhedra} by dropping an additional axiom. Eisenbrand et al.{} prove that even for this more general class of objects, the quasi-polynomial diameter upper bound of Kalai and Kleitman in~\cite{Kalai:Quasi-polynomial} holds. On the other hand, they give a construction to prove that the diameters of some objects in this more general class are superlinear. One can say that this superlinear lower bound in~\cite{Eisenbrand:Diameter-of-Polyhedra} is evidence against the Linear Hirsch Conjecture.

In this paper, we introduce \emph{subset partition graphs} (defined in Section~\ref{section:subset-partition-graphs}), a new family of combinatorial abstractions of the graphs of polyhedra. The new abstraction is a flexible framework inspired by the combinatorial properties found in previous abstractions, and provides many variants for abstractions of polyhedra. Our main theoretical result is the construction of a family of subset partition graphs whose diameter is superpolynomial (see Theorem~\ref{theorem:general-lower}), which can be considered evidence against the Polynomial and Linear Hirsch Conjectures. We also prove a quadratic lower bound on the diameters of a special subclass of subset partition graphs, which provides further evidence against the Linear Hirsch Conjecture. Moreover, we present a strategy to disprove the Linear Hirsch Conjecture via combinatorial operations on subset partition graphs.

\vskip12pt

\noindent{\bf Outline of this paper:} In Section~\ref{section:previous-abstractions-main}, we formally define the previous abstractions and survey the known upper and lower bounds. Section~\ref{section:base-abstractions-clf} introduces the abstraction presented in~\cite{Eisenbrand:Diameter-of-Polyhedra} and Section~\ref{section:previous-abstractions} discusses combinatorial properties defining special cases. Motivated by this discussion, in Section~\ref{section:subset-partition-graphs} we define our new combinatorial abstraction, the subset partition graphs. In Section~\ref{section:bounds} we prove upper and lower bounds on the diameters of subset partition graphs that satisfy particular sets of properties. We give some final remarks in Section~\ref{section:final-remarks}, and present a strategy for disproving the Linear Hirsch Conjecture.

\section{Previous abstractions}\label{section:previous-abstractions-main}

Here we describe relevant previous combinatorial abstractions in the literature\footnote{Very recently, new abstractions unrelated to Section~\ref{section:subset-partition-graphs} introduced in~\cite{kalaiblog:polymath3part6} have formed the discussion of a web-based discussion: See \url{http://gilkalai.wordpress.com/2010/09/29/polymath-3-polynomial-hirsch-conjecture/}, \url{http://gilkalai.wordpress.com/2010/10/03/polymath-3-the-polynomial-hirsch-conjecture-2/}, \url{http://gilkalai.wordpress.com/2010/10/10/polymath3-polynomial-hirsch-conjecture-3/}, \url{http://gilkalai.wordpress.com/2010/10/21/polymath3-polynomial-hirsch-conjecture-4/},   \url{http://gilkalai.wordpress.com/2010/11/28/polynomial-hirsch-conjecture-5-abstractions-and-counterexamples/}, and \url{http://gilkalai.wordpress.com/2011/04/13/polymath3-phc6-the-polynomial-hirsch-conjecture-a-topological-approach/}.}. In all cases, the object of study is an abstract generalization of simple polyhedra. We say that a $d$-dimensional polyhedron $P$ is \emph{simple} if each of its vertices is contained in exactly $d$ of the $n$ facets of $P$. For the study of diameters of polyhedra, it is enough to consider simple polyhedra, since the largest diameter of a $d$-polyhedron with $n$ facets is found among the simple $d$-polyhedra with $n$ facets~\cite{KimSantos:HirschSurvey}.

Let $H(n,d)$ denote the maximum diameter of $d$-dimensional polytopes with $n$ facets, and let $H_u(n,d)$ denote the maximum diameter of $d$-dimensional polyhedra with $n$ facets. Since polytopes are polyhedra, we clearly have $H(n,d) \leq H_u(n,d)$. In~\cite{Kalai:Quasi-polynomial}, Kalai and Kleitman proved that $H_u(n,d) \leq n^{1+ \log_2 d}$.

\subsection{Base abstractions and connected layer families}\label{section:base-abstractions-clf}

We now describe an abstraction of Eisenbrand et al.{} introduced in~\cite{Eisenbrand:Diameter-of-Polyhedra}. Fix a finite set $S$ of cardinality $n$, called the \emph{symbol set}. (Each $s \in S$ is called a \emph{symbol}.) Let $\mathcal{A} \subseteq \binom{S}{d}$, where $\binom{S}{d}$ is the set of all $d$-element subsets of $S$. We consider connected graphs of the form $G = (\mathcal{A}, E)$ with  vertex set $\mathcal{A}$ and edge set $E$ satisfying:
\begin{itemize}
\item for each $A, A' \in \mathcal{A}$, there is a path from $A$ to $A'$ in the graph $G$ using only vertices that contain $A \cap A'$.
\end{itemize}
If this occurs, we say that $G$ is a $d$-dimensional \emph{base abstraction} of $\mathcal{A}$ on the symbol set $S$. The \emph{diameter} of the base abstraction is the diameter of the graph $G$.

Note that the graphs of simple $d$-dimensional polyhedra with $n$ facets are base abstractions. Indeed, each of the $n$ facets of $P$ is associated with a symbol $s$ in $S$. Since our polyhedron $P$ is simple, each vertex of $P$ is incident to exactly $d$ facets, and so it is associated with the $d$-element subset of $S$ consisting of the corresponding symbols. The graph $G$ used in the base abstraction ``is'' the graph of the polyhedron. The defining condition is satisfied since, for every pair of vertices $y$ and $z$ on a polyhedron $P$, there is a path from $y$ to $z$ on the smallest face of $P$ containing both $y$ and $z$.

Since the graphs of polyhedra are base abstractions, we clearly have $H_u(n,d) \leq B(n,d)$, where $B(n,d)$ denotes the maximum diameter among $d$-dimensional base abstractions on a symbol set of size $n$. In~\cite{Eisenbrand:Diameter-of-Polyhedra}, Eisenbrand et al.{} prove that the Kalai-Kleitman bound of $n^{1+\log_2 d}$ is also an upper bound for $B(n,d)$. They also show that $B(n,\frac{n}{4})$ is in $\Omega(n^2 / \log n)$, i.{}e.{}, the diameters of base abstractions obey a quadratic lower bound (up to a logarithmic factor).

Their upper and lower bounds for $B(n,d)$ were proved by analyzing the diameters of a related combinatorial object. A $d$-dimensional \emph{connected layer family} of $\mathcal{A} \subseteq \binom{S}{d}$ on a set of $n = |S|$ symbols is a family $\mathcal{V} = \{ \mathcal{V}_0, \ldots, \mathcal{V}_t \}$ of non-empty sets such that:
\begin{itemize}
\item {\bf partition property:} $\mathcal{A} = \mathcal{V}_0 \cup \cdots \cup \mathcal{V}_t$,
\item {\bf disjointness property:} $\mathcal{V}_i \cap \mathcal{V}_j = \emptyset$ if $i \not= j$,
\item {\bf connectivity property:} for all $i < j < k$ and $A \in \mathcal{V}_i$, $A' \in \mathcal{V}_k$, there is an $A'' \in \mathcal{V}_j$ such that $A \cap A' \subseteq A''$.
\end{itemize}
Each individual $\mathcal{V}_i$ is called a \emph{layer}. The \emph{diameter} of the connected layer family $\mathcal{V} = \{ \mathcal{V}_0, \ldots, \mathcal{V}_t \}$ is $t$. Recall that $B(n,d)$ denotes the maximal diameter of a $d$-dimensional base abstraction on a symbol set of size $n$. We use  $C(n,d)$ to denote the maximal diameter of a $d$-dimensional connected layer family on a symbol set of size $n$. 

In~\cite{Eisenbrand:Diameter-of-Polyhedra}, Eisenbrand et al.{} prove $B(n,d) = C(n,d)$. The proof that $B(n,d) \geq C(n,d)$ follows from the fact that a base abstraction is obtained from a connected layer family by connecting $d$-sets $A \in \mathcal{V}_i$ and $A' \in \mathcal{V}_j$ with an edge if $|i-j| \leq 1$. The proof of $B(n,d) \leq C(n,d)$ follows from the fact that a connected layer family is obtained from a base abstraction by the following layering process: let $G = (\mathcal{A}, E)$ be a $d$-dimensional base abstraction, and fix a particular $d$-subset $Z \in \mathcal{A}$. Then, let $\mathcal{V}_i := \{ A \in \mathcal{A} : \dist_G(A,Z) = i\}$. Note that $\mathcal{V} = \{ \mathcal{V}_0, \ldots, \mathcal{V}_t \}$ is a $d$-dimensional connected layer family with diameter $t$ since the face path property immediately implies that the collection $\mathcal{V}$ satisfies the connectivity property. See Lemma~3.4.2 in~\cite{Hahnle:Diplomathesis} for a detailed proof. The bound of $B(n,d) \leq C(n,d)$ is obtained by choosing a base abstraction $G=(\mathcal{A},E)$ whose diameter is $B(n,d)$ and picking a pair $(Z,Z')$ of $d$-sets in $\mathcal{A}$ at distance $B(n,d)$.

\subsection{Previous abstractions satisfying additional properties}\label{section:previous-abstractions}

Our new abstraction defined in Section~\ref{section:subset-partition-graphs} is motivated by following the same layering process just described, but starting with special cases of base abstractions that were studied in~\cite{AdlerDantzigMurty:AbstractPolytopes} and~\cite{Kalai:DiameterHeight} which satisfied additional combinatorial properties. Let $G = (\mathcal{A}, E)$ be a $d$-dimensional base abstraction. If the condition ``$(A,A')$ is an edge in $E$ if and only if $|A \cap A'| = d-1$'' holds, then we say that $G$ is an \emph{ultraconnected set system}. This condition is called \emph{ultraconnectedness}. Note that ultraconnectedness holds for the graphs of polyhedra: if two vertices $y$ and $z$ of a simple $d$-polyhedron $P$ share all but one facet in common, then they are neighbors in the graph of $P$. In~\cite{Kalai:DiameterHeight}, Kalai proved that ultraconnected set systems satisfy the diameter bound of~\cite{Kalai:Quasi-polynomial}.

If the layering process described earlier is applied to a base abstraction satisfying the ultraconnectedness property, then the resulting collection $\mathcal{V} = \{ \mathcal{V}_0, \ldots, \mathcal{V}_t \}$ of layers satisfies the following adjacency property: if $A, A' \in \mathcal{A}$ and $|A \cap A'| = d-1$, then $A$ and $A'$ are in the same or adjacent layers, i.e., if $A \in \mathcal{V}_i$ and $A' \in \mathcal{V}_j$ then $|i-j| \in \{0,1\}$.

Adler, Dantzig, and Murty considered an abstraction where, in addition to ultraconnectedness, the following \emph{polytopal endpoint-count condition} must hold: if $F \in \binom{S}{d-1}$, then $\{A \in \mathcal{A} : F \subset A\}$ has cardinality either $0$ or $2$. An ultraconnected set system satisfying this additional condition is called an \emph{abstract polytope}. For polytopes, the polytopal endpoint-count condition translates into the fact that every $1$-face of a polytope is incident to exactly two $0$-faces. This condition fails for polyhedra because of $1$-faces containing only one vertex, so we consider a \emph{polyhedral endpoint-count condition}, where we allow $|\{A \in \mathcal{A} : F \subset A\}|$ to be $1$ as well. In~\cite{Adler:MaxDiamAbsPoly}, Adler and Dantzig prove that $d$-dimensional abstract polytopes with $n$ symbols satisfy the Hirsch bound if $n-d \leq 5$, the abstract analogue of~\cite{Klee:d-step}.

\section{Subset partition graphs}\label{section:subset-partition-graphs}

Now that we have seen how properties defining special classes of base abstractions imply certain structural properties on connected layer families obtained by the layering process, we are ready to define subset partition graphs, which are a generalization of base abstractions and connected layer families. As before, we have a set $S$ called the symbol set, with each $s \in S$ being a symbol.

\begin{definition}
Fix a finite set $S$ of cardinality $n$ and a set $\mathcal{A} \subseteq \binom{S}{d}$ of subsets. Let $G = (\mathcal{V},E)$ be a connected graph with vertex set $\mathcal{V} = \{\mathcal{V}_0,\ldots,\mathcal{V}_t\}$. If $\mathcal{V}$ is a partition of $\mathcal{A}$ in the sense that:
\begin{enumerate}
\item $\mathcal{A} = \mathcal{V}_0 \cup \cdots \cup \mathcal{V}_t$,
\item $\mathcal{V}_i \cap \mathcal{V}_j = \emptyset$ if $i \not= j$, and
\item $\mathcal{V}_i \not= \emptyset$ for all $i$,
\end{enumerate}
then we say that $G$ is a $d$-dimensional \emph{subset partition graph} of $\mathcal{A}$ on the symbol set $S$.
\end{definition}
Note that $d$-dimensional subset partition graphs on a symbol set $S$ of size $n$ are combinatorial abstractions of simple $d$-dimensional polyhedra with $n$ facets: each of the $n$ facets of a $d$-dimensional polyhedron $P$ corresponds to a symbol $s \in S$, and a vertex of $P$ corresponds to a $d$-set $A \in \mathcal{A}$ given by the incident facets. 

As defined, the only condition on the edge set $E$ is that the graph $G$ is connected, and thus subset partition graphs do not yet give an interesting combinatorial abstraction of the graphs of polytopes and polyhedra. For this, one should require one or more of the combinatorial properties identified below, which are conditions on the set $\mathcal{A}$ of subsets or on the edge set $E$ of the graph $G$. Before identifying the properties, we need to define the following operation on subset partition graphs:
\begin{definition}[Restriction]
Let $G = (\mathcal{V},E)$ be a subset partition graph of $\mathcal{A}$ on the symbol set $S$, and let $F \subseteq S$ be a collection of symbols. We define a new subset partition graph $G_F = (\mathcal{V}_F, E_F)$ of $\mathcal{A}_F$ on the symbol set $S' := S$. 

We define $\mathcal{A}_F := \{A \in \mathcal{A} : F \subseteq A\}$. That is to say, $\mathcal{A}_F$ is obtained by deleting from $\mathcal{A}$ (and the containing $\mathcal{V}_i$) any $d$-set $A$ which does not contain $F$. This deletion from the vertices in $\mathcal{V}$ may make some of them empty. The vertex set $\mathcal{V}_F$ consists of those vertices in $\mathcal{V}$ which are still non-empty, and two vertices in $\mathcal{V}_F$ are connected by an edge in $E_F$ exactly when the associated vertices were connected in $E$. The subset partition graph $G_F$ is called the \emph{restriction} of $G$ with respect to $F$.
\end{definition}

Based on the discussion in Sections~\ref{section:base-abstractions-clf} and~\ref{section:previous-abstractions}, and together with the definition of restriction, we identify the main properties that should be considered for subset partition graphs:
\begin{itemize}
\item {\bf dimension reduction:} if $F \subseteq S$ such that $|F| \leq d$, then (the underlying graph of) the restriction $G_F$ is a connected graph.
\item {\bf adjacency:} if $A, A' \in \mathcal{A}$ and $|A \cap A'| = d-1$, then $A$ and $A'$ are in the same or adjacent vertices of $G$.
\item {\bf strong adjacency:} adjacency holds and, if two vertices $\mathcal{V}_i$ and $\mathcal{V}_j$ are adjacent in $G$ then there are $d$-sets $A \in \mathcal{V}$ and $A' \in \mathcal{V}_j$ such that $|A \cap A'| = d-1$.
\item {\bf endpoint-count:} if $F \in \binom{S}{d-1}$, then $|\{A \in \mathcal{A} : F \subset A\}| \leq 2$.
\end{itemize}
Note that the graphs of polytopes satisfy dimension reduction since the graph of a face is connected. The notion of strong adjacency was proposed by H\"{a}hnle in~\cite{Hahnle:SPGs}.
\begin{example}\label{example:non-polytopal-spg}
Figure~\ref{figure:example-spg} illustrates a $3$-dimensional subset partition graph on a symbol set $S$ with $n=|S|=6$ which satisfies the dimension reduction, strong adjacency, and endpoint-count properties. The graph of $G$ has six vertices and $|\mathcal{A}| = 2 \times 2 + 4 \times 1$.
\begin{figure}[hbt]
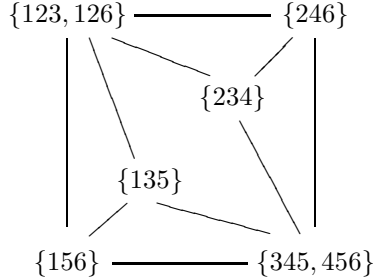

\[ 
\xy 
(0,0)*{\{123,126\}}; 
(33,0)*{\{246\}}; 
(0,-33)*{\{156\}};
(33,-33)*{\{345,456\}};
(22,-11)*{\{234\}};
(11,-22)*{\{135\}};
(9,0); (27,0) **\dir{-}; 
(6,-33); (24,-33) **\dir{-}; 
(0,-3); (0,-28) **\dir{-}; 
(33,-3); (33,-28) **\dir{-}; 
(30,-3); (25,-8) **\dir{-};
(3,-29.3); (8,-25) **\dir{-};
(3,-3); (9, -19) **\dir{-};
(6,-3); (19, -8) **\dir{-};
(27,-29); (12, -25) **\dir{-};
(31,-29); (23, -14) **\dir{-};
\endxy 
\]
\caption{A subset partition graph $G$ with $d=3$ and $n=6$.}\label{figure:example-spg}
\end{figure}
\end{example}
We define these (and the following) properties for subset partition graphs because we want a flexible framework where we consider certain collections of properties at a time. By considering certain properties ``on'' and other properties ``off'' we hope to understand which properties are crucial in proving diameter bounds.
For instance, the class of subset partition graphs with the dimension reduction property together with the additional condition that the underlying graph of $G$ is a path is exactly the class of connected layer families. Subset partition graphs interpolate between connected layer families and polyhedral graphs by the selection of properties.
(Note that the properties presented are not necessarily independent. For instance, endpoint-count together with dimension reduction implies adjacency. However, we explicitly list the adjacency property as we will consider subset partition graphs satisfying adjacency but not satisfying dimension reduction.)

%

In addition to the main properties described above, there are other combinatorial properties of polytopes which translate into natural properties to consider for subset partition graphs, for example:
\begin{itemize}
\item {\bf $d$-connectedness:} the graph $G$ is $d$-connected.
\item {\bf $d$-regularity:} the graph $G$ is $d$-regular.
\item {\bf $d$-neighbors:} for every $A \in \mathcal{A}$, $|\{A' \in \mathcal{A} \setminus \{A\} : |A \cap A'| = d-1\} | = d$.
\item {\bf one-subset:} $|\mathcal{V}_i| = 1$ for each $i = 0, \ldots, t$.
\end{itemize}
The $d$-connectedness property for subset partition graphs is desirable due to Balinski's Theorem, which says that the graph of a $d$-dimensional polytope is $d$-connected~\cite{Balinski:On-the-graph-structure}. The $d$-regularity and $d$-neighbors property hold for the graphs of simple $d$-polytopes: at each vertex $v$ of a simple $d$-polytope $P$, there are $d$ edges emanating from $v$, and in a bounded polytope, each of these edges leads to another vertex of $P$. These properties do not hold for unbounded polyhedra. The one-subset property, which says that each vertex should have a single $d$-subset, holds for the graphs of polytopes: each vertex in the subset partition graph should contain the $d$-set of incident facets.

There are two easy operations one can perform on a subset partition graph $G = (\mathcal{V},E)$. Let $\mathcal{V}_i$ and $\mathcal{V}_j$ be two vertices in $\mathcal{V}$. Then:
\begin{enumerate}
\item {\bf Contraction:} If $\mathcal{V}_i$ and $\mathcal{V}_j$ are connected by an edge in $E$, contraction on the edge produces a new subset partition graph with one less vertex: the two vertices $\mathcal{V}_i$ and $\mathcal{V}_j$ are replaced with a new vertex which contains all of the $d$-sets which were in $\mathcal{V}_i$ and $\mathcal{V}_j$.
\item {\bf Edge addition:} If $\mathcal{V}_i$ and $\mathcal{V}_j$ were not connected by an edge in $E$, edge addition makes the two vertices adjacent. The resulting subset partition graph has one more edge than the original subset partition graph $G$ does.
\end{enumerate}
\begin{example}
The subset partition graph described in Example~\ref{example:non-polytopal-spg} is obtained from the natural subset partition graph for a $3$-cube after two contractions.
\end{example}
We remark that there is a clear analogue for contraction in the theory of connected layer families. We also note the following simple but potentially powerful effect of these operations, which will be important in Section~\ref{section:final-remarks}:
\begin{remark}\label{remark:effect-of-operations}
We wish to note what happens to the dimension reduction, adjacency, and endpoint-count properties for subset partition graphs under the operations of contraction and edge addition:
\begin{enumerate}
\item All three of these properties are preserved under both operations.
\item After a sufficient number of contractions and edge additions, the resulting graph will be a complete graph, and thus the dimension reduction and adjacency conditions will hold.
\end{enumerate}
\end{remark}

\section{Upper and lower bounds}\label{section:bounds}

In this section, we prove upper and lower bounds on the diameters of subset partition graphs satisfying various combinations of the main properties. First, note the following easy bound:
\begin{remark}\label{remark:cfl-one-subset-bound}
Recall that if we consider subset partition graphs with the dimension reduction property, together with the condition that the graph $G$ is a path, then this is exactly the same as studying connected layer families. If we also add the one-subset property, then the Hirsch bound of $n-d$ holds: up to permutation of symbols, the unique longest path is $\{1, \ldots, d\}, \{2, \ldots, d+1\}, \ldots, \{n-d+1, \ldots, n\}$.
\end{remark}

\subsection{Upper bound}

Here we prove that the diameters of subset partition graphs satisfying the dimension reduction property obey the Kalai-Kleitman quasi-polynomial upper bound of $n^{1+\log_2 d}$.

\begin{theorem}
Let $J(n,d)$ denote the maximal diameter among $d$-dimensional subset partition graphs on $n$ symbols which satisfy dimension reduction. Then, $J(n,d) \leq n^{1+\log_2 d}$.
\end{theorem}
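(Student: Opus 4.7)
The plan is to reduce the problem to the corresponding statement for connected layer families, which was already established by Eisenbrand et al.: since $C(n,d) \leq n^{1 + \log_2 d}$, it suffices to produce, from any $d$-dimensional subset partition graph $G = (\mathcal{V},E)$ on $n$ symbols satisfying dimension reduction with diameter $D$, a connected layer family on the same $\mathcal{A} \subseteq \binom{S}{d}$ whose diameter equals $D$. The construction I have in mind is the natural BFS-layering from a diameter endpoint.

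Concretely, choose $\mathcal{V}_*, \mathcal{V}_{**} \in \mathcal{V}$ with $\dist_G(\mathcal{V}_*,\mathcal{V}_{**}) = D$ and, for each $\ell \in \{0,1,\ldots,D\}$, define
\[
\mathcal{W}_\ell \;:=\; \bigcup\bigl\{\mathcal{V}_i \in \mathcal{V} \;:\; \dist_G(\mathcal{V}_*,\mathcal{V}_i) = \ell\bigr\}.
\]
Since $G$ is connected and $\mathcal{V}_*$ has eccentricity $D$, every $\mathcal{W}_\ell$ is nonempty for $0 \leq \ell \leq D$; the partition and disjointness properties of a CLF are immediate from the fact that the $\mathcal{V}_i$ already partition $\mathcal{A}$.

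The key step, and essentially the only nontrivial one, is to verify the CLF connectivity property: given $i < j < k$, $A \in \mathcal{W}_i$, and $A' \in \mathcal{W}_k$, I must produce $A'' \in \mathcal{W}_j$ with $A \cap A' \subseteq A''$. Because $|A \cap A'| \leq d$, dimension reduction guarantees that $G_{A \cap A'}$ is connected. The SPG vertices containing $A$ and $A'$ both lie in $G_{A \cap A'}$, so there is a path between them inside $G_{A \cap A'}$; viewed as a walk in $G$, BFS-levels along this walk change by at most one per step, so the walk must visit some SPG vertex $\mathcal{V}_c$ at BFS-level exactly $j$. Since $\mathcal{V}_c$ lies in $G_{A \cap A'}$, it contains a $d$-set $A'' \supseteq A \cap A'$, and $A'' \in \mathcal{W}_j$ as required.

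Once the CLF is in hand, its diameter is $D$ by construction, so $J(n,d) = D \leq C(n,d) \leq n^{1 + \log_2 d}$. The connectivity verification is what I expect to be the principal (if modest) obstacle: the argument is a routine combination of dimension reduction with the elementary fact that BFS-levels drop by at most one along an edge, but it has to be spelled out carefully since the restriction $G_{A \cap A'}$ is the induced subgraph of $G$ on those SPG vertices meeting $A \cap A'$, and one must check that a path realizing its connectedness crosses the BFS-layers in the required way rather than bypassing level $j$.
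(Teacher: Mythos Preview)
Your proposal is correct and follows essentially the same approach as the paper: BFS-layer from a diameter-realizing vertex, then use dimension reduction on $G_{A\cap A'}$ together with the fact that BFS-levels change by at most one along edges of $G$ to produce the required $A''$ in the intermediate layer, thereby obtaining a connected layer family of the same diameter and invoking $C(n,d)\le n^{1+\log_2 d}$. The only cosmetic difference is that the paper phrases the connectivity verification by contradiction (if no $A''$ in layer $j$ contains $A\cap A'$, then removing layer $j$ separates $A$ from $A'$ in $G_{A\cap A'}$), whereas you argue directly via a path in $G_{A\cap A'}$ crossing level $j$.
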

\begin{proof}
Let $G = (\mathcal{V},E)$ be an arbitrary $d$-dimensional subset partition graph of $\mathcal{A}$ on $n$ symbols of maximal diameter which satisfies dimension reduction. Let $A_1$ and $A_2$ be two arbitrary subsets belonging to $\mathcal{A}$ with distance $J(n,d)$. We will construct a $d$-dimensional connected layer family with $n$ symbols whose diameter is $J(n,d)$ which proves it is bounded above by the maximal diameter $C(n,d)$ of connected layer families.

The connected layer family is obtained from the layering process. We define $\mathcal{F}_i := \{ A \in \mathcal{A} : \dist_G(A,A_1) = i\}$. We claim that $\mathcal{F} = \{ \mathcal{F}_0, \ldots, \mathcal{F}_{J(n,d)} \}$ is a $d$-dimensional connected layer family. To prove this, let $i < j < k$ be arbitrary and $A \in \mathcal{F}_i$, $A' \in \mathcal{F}_k$. We prove that there is an $A'' \in \mathcal{F}_j$ such that $F \subseteq A''$, where $F := A \cap A'$. For a contradiction, suppose that there is no $A'' \in \mathcal{F}_j$ containing $F$.

Since $d$-sets in adjacent vertices of $G$ are in the same or adjacent layer of $ \{ \mathcal{F}_0, \ldots, \mathcal{F}_{J(n,d)}\}$, removing all vertices (and incident edges) containing a $d$-set in $\mathcal{F}_j$ in the graph $G$ disconnects the $d$-sets in $\mathcal{F}_i$ from the $d$-sets in $\mathcal{F}_k$. Thus the vertices containing $A$ and $A'$ lie in distinct connected components of $G_F$, contradicting the dimension reduction property. Therefore $J(n,d) \leq C(n,d) \leq n^{1+\log_2 d}$.
\end{proof}

\subsection{Lower bounds}

In this section, we prove lower bounds on the diameters of subset partition graphs satisfying the adjacency and endpoint-count conditions. First, we prove a general lower bound. Then we prove a lower bound for a special subclass of subset partition graphs satisfying natural combinatorial properties coming from an interesting class of polytopes.

\begin{remark}\label{remark:spg-lower-bound-from-clf}
The construction of Eisenbrand et al.{} in~\cite{Eisenbrand:Diameter-of-Polyhedra} proves that subset partition graphs with the dimension reduction property have superlinear diameter, which can be considered evidence against the Linear Hirsch Conjecture.
\end{remark}

The following construction due to Santos (\cite{Santos:PersonalCommunication2011}, which improves the author's previously unpublished bound) gives a superpolynomial lower bound for subset partition graphs with the adjacency and endpoint-count conditions:
\begin{theorem}\label{theorem:general-lower}
Let $K(n,d)$ denote the maximum diameter of $d$-dimensional subset partition graphs with $n$ symbols satisfying the strong adjacency, endpoint-count, and one-subset properties. There is a universal constant $\kappa$ such that 
\[ \frac{K(n,d)}{n^{d/4}} \geq \kappa > 0\]
for infinitely many $n$ and $d$.
\end{theorem}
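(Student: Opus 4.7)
The plan is to construct explicitly, for each $k \ge 1$ and infinitely many $n$, a $d$-dimensional subset partition graph on $n$ symbols (with $d = 4k$) satisfying strong adjacency, endpoint-count, and one-subset whose diameter is at least $c_k \, n^{d/4}$, for some $c_k > 0$ depending only on $k$. Fixing any single $k$ then yields infinitely many pairs $(n,d)$ at the uniform constant $\kappa := c_k$, which proves the theorem.

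First I would record a structural simplification. Under one-subset the vertex set of $G$ is in bijection with $\mathcal{A}$, and under strong adjacency the edges of $G$ are exactly the pairs with $|A \cap A'| = d - 1$. Hence $G$ is the induced Johnson-subgraph on $\mathcal{A}$. Moreover, any $(d-1)$-subset $F$ contained in three distinct elements of $\mathcal{A}$ would produce a triangle in this subgraph, so once $\mathcal{A}$ is the vertex set of a chord-free path in the Johnson graph $J(n,d)$, endpoint-count follows automatically. The problem reduces to producing, for infinitely many $n$, an induced path in $J(n,d)$ of length $\Omega(n^{d/4})$.

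The base gadget is a dimension-four sliding window: on $m + 3$ symbols let $\mathcal{B}_m := \{\{i, i+1, i+2, i+3\} : 1 \le i \le m\}$, which is a chord-free path of length $m - 1$ in $J(m+3, 4)$, since two distinct windows at index distance at least $2$ share at most two symbols. For $d = 4k$ I would split $S$ into $k$ disjoint blocks $B_1, \ldots, B_k$ of size $m + 3$ and take \emph{product vertices} of the form $A_1 \cup \cdots \cup A_k$ with $A_i \in \mathcal{B}_m(B_i)$. Disjointness of the blocks gives $|A \cap A'| = \sum_i |A_i \cap A_i'|$, so two product vertices are Johnson-adjacent iff their $k$-tuples of slider indices differ in exactly one coordinate by $\pm 1$; the induced Johnson-subgraph on the full product is precisely the $k$-dimensional grid graph $[m]^k$, whose diameter is only $k(m-1)$. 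To extract multiplicative diameter, I would restrict $\mathcal{A}$ to the vertex set of a long chord-free (``snake-in-the-box'') path in $[m]^k$, built recursively: given a chord-free path of length $L_{k-1} \ge m^{k-1}/2^{k-2}$ in $[m]^{k-1}$, stack $\lceil m/2 \rceil$ alternating-direction copies at even heights $0, 2, 4, \ldots$ of the $k$th coordinate and join consecutive stacks by a single transition vertex at the intermediate odd height. Chord-freeness is preserved because two even heights differ by at least $2$ in that coordinate and are hence non-adjacent, and each transition vertex's only grid-neighbors in $\mathcal{A}$ are its two path-neighbors. This yields $L_k \ge m^k/2^{k-1}$, and with $n = k(m+3)$ the resulting diameter is at least $(n/(2k))^k$, giving $c_k = (2k)^{-k}$.

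The main obstacle is uniform control of $\kappa$ across all $k$: the recursive stacking loses a factor of $2$ per dimension while the block partition contributes a factor of $k^k$, so $c_k \to 0$ as $k \to \infty$. Because the theorem only demands infinitely many pairs $(n,d)$ at a single positive $\kappa$, this is resolved by fixing any one value of $k$ (say $k = 1$, in which case the dimension-four sliding window alone gives $\kappa \approx 1$). Obtaining a genuinely uniform constant across all $d$ would require an induced-path construction in $[m]^k$ whose density does not decay in $k$, which the elementary recursive stacking does not provide.
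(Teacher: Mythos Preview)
Your proof is correct and complete enough to be filled in, but it follows a genuinely different route from the paper's.

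The paper's construction is polytope-based. It takes the polar $Q$ of a $\tfrac{d}{2}$-dimensional cyclic polytope on $\tfrac{n}{2}$ vertices, uses Klee's theorem that the graph of $Q$ has a Hamiltonian path $Z_1,\dots,Z_t$ with $t \sim \binom{n/2}{d/4}$, and then works on a doubled symbol set $\Sigma \cup \Sigma'$. The $d$-sets $A_i := Z_i \cup Z_i'$ satisfy $|A_i \cap A_{i+1}| = d-2$, and two auxiliary $d$-sets $W_{i,1},W_{i,2}$ are inserted between consecutive $A_i$'s to produce a graph (not a path, but a chain of length-two parallel paths) satisfying strong adjacency and endpoint-count, with diameter $2(t-1)$.

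Your approach is entirely elementary: you observe that under one-subset and strong adjacency the graph \emph{is} the induced Johnson subgraph on $\mathcal{A}$, reduce endpoint-count to triangle-freeness, and then build a long induced path in $J(n,4k)$ as a snake through the $k$-fold grid of sliding $4$-windows. This avoids any appeal to cyclic polytopes or to Klee's Hamiltonicity result, and the verification of chord-freeness (even-layer separation plus the transition-vertex argument) is self-contained. The price is a worse dependence on $d$ in the implicit constant --- roughly $(d/2)^{-d/4}$ versus the paper's $(d/4)!^{-1} \cdot 2^{-d/4}$ --- but since both proofs ultimately fix $d$ to extract a single universal $\kappa$, this is immaterial for the statement as phrased. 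What the paper's construction buys is a cleaner conceptual link to polytope graphs (the $A_i$ literally come from a polytope), which matters for the realization strategy discussed later in the paper; what your construction buys is independence from external results and a transparent inductive structure.
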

\begin{proof}
Let $d \geq 8$ be a multiple of four. Let $n > d$ be even. We construct a $d$-dimensional subset partition graph $G = (\mathcal{V},E)$ with $n$ symbols. The symbol set is $S := \{1,\ldots,k\} \cup \{1',\ldots,k'\}$, where $k = \frac{n}{2}$.

Let $P$ be a $\frac{d}{2}$-dimensional cyclic polytope with $k$ vertices. The polar of $P$ is a simple $\frac{d}{2}$-polytope with $k$ facets. Let $Q$ and $Q'$ be two copies of the polar of $P$, with respective symbol sets $\Sigma = \{1,\ldots,k\}$ and $\Sigma' = \{1',\ldots,k'\}$ labeling the facets so that the involution $f : s \mapsto s'$ is a combinatorial bijection. Since $Q$ has a Hamiltonian path $\Pi$ (see~\cite{Klee:PathsII}), we order the $t$ vertices of $Q$ as $Z_1, \ldots, Z_{t}$ using the path $\Pi$. Note 
\[t = \frac{n}{n-\frac{d}{2}}\binom{\frac{n}{2}-\frac{d}{4}}{\frac{d}{4}}
.\]
(Here, each $Z_i$ is a $\frac{d}{2}$-subset of $\Sigma$, thus their images $(Z_i')_{i=1}^{t}$ with $Z_i' = f(Z_i)$ trace the same Hamiltonian path in $Q'$.)

The vertex set $\mathcal{V}$ of $G$ is $\{\mathcal{V}_i : i = 1,\ldots,t\} \cup \{\mathcal{W}_{i,j} : i = 1,\ldots,t-1;\, j =1,2\}$ and each $\mathcal{W}_{i,j}$ has edges to $\mathcal{V}_i$ and $\mathcal{V}_{i+1}$. See Figure~\ref{figure:almost-path-graph}.
\begin{figure}[hbt]
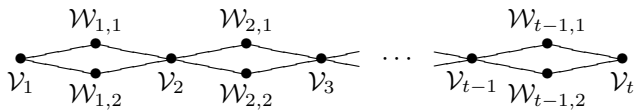

\begin{center}
\[ 
\xy 
(0,0)*+{\bullet};
(20,0)*+{\bullet};
(40,0)*+{\bullet};
(60,0)*+{\bullet};
(80,0)*+{\bullet};
(10,2)*+{\bullet};
(30,2)*+{\bullet};
(70,2)*+{\bullet};
(10,-2)*+{\bullet};
(30,-2)*+{\bullet};
(70,-2)*+{\bullet};
(0,-3.1)*+{\mathcal{V}_1};
(20,-3.1)*+{\mathcal{V}_2};
(40,-3.1)*+{\mathcal{V}_3};
(60,-3.1)*+{\mathcal{V}_{t-1}};
(80,-3.1)*+{\mathcal{V}_t};
(10,5)*+{\mathcal{W}_{1,1}};
(30,5)*+{\mathcal{W}_{2,1}};
(70,5)*+{\mathcal{W}_{t-1,1}};
(10,-5)*+{\mathcal{W}_{1,2}};
(30,-5)*+{\mathcal{W}_{2,2}};
(70,-5)*+{\mathcal{W}_{t-1,2}};
(50,0)*+{\cdots};
(0,0); (10,-2) **\dir{-};
(20,0); (30,-2) **\dir{-};
(40,0); (45,-1) **\dir{-};
(60,0); (70,-2) **\dir{-};
(20,0); (10,-2) **\dir{-};
(40,0); (30,-2) **\dir{-};
(60,0); (55,-1) **\dir{-};
(80,0); (70,-2) **\dir{-};
(0,0); (10,2) **\dir{-};
(20,0); (30,2) **\dir{-};
(40,0); (45,1) **\dir{-};
(60,0); (70,2) **\dir{-};
(20,0); (10,2) **\dir{-};
(40,0); (30,2) **\dir{-};
(60,0); (55,1) **\dir{-};
(80,0); (70,2) **\dir{-};
\endxy 
\]
\end{center}
\caption{The underlying graph of $G = (\mathcal{V},E)$}\label{figure:almost-path-graph}
\end{figure}
Each vertex $\mathcal{V}_i$ and $\mathcal{W}_{i,j}$ contains exactly one $d$-set. Define $\mathcal{V}_i = \{A_i\}$, where $A_i = Z_i \cup Z_i'$. The sets $A_i$ and $A_{i+1}$ have $d-2$ elements in common, so each $D_i := A_i \vartriangle A_{i+1}$ has cardinality $4$, where $\vartriangle$ denotes symmetric difference.

\begin{figure}[hbt]
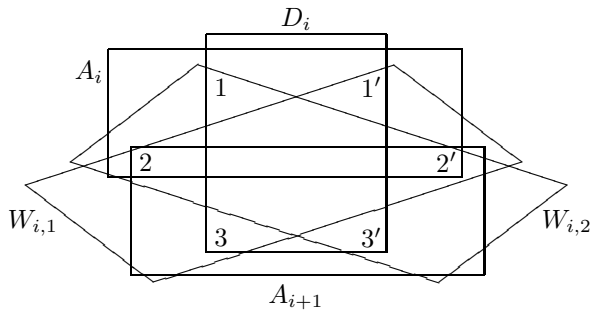

\begin{center}
\[ 
\xy 
(-10,10)*+{1};
(-20,0)*+{2};
(-10,-10)*+{3};
(10,10)*+{1'};
(20,0)*+{2'};
(10,-10)*+{3'};
(-25,15); (22,15) **\dir{-};
(-25,-2); (22,-2) **\dir{-};
(-25,15); (-25,-2) **\dir{-};
(22,15); (22,-2) **\dir{-};%
(-27.5, 12)*+{A_i};
(-22,-15); (25,-15) **\dir{-};
(-22,2); (25,2) **\dir{-};
(-22,-15); (-22,2) **\dir{-};
(25,-15); (25,2) **\dir{-};%
(0, -18)*+{A_{i+1}};
(-12, 17); (12,17); **\dir{-};
(-12, -12); (12,-12); **\dir{-};
(-12, 17); (-12,-12); **\dir{-};
(12, 17); (12,-12); **\dir{-};%
(0, 19)*+{D_i};
(-13,13); (36, -3); **\dir{-};
(-30,0); (19, -16); **\dir{-};
(-13,13); (-30,0); **\dir{-};
(36,-3); (19,-16); **\dir{-};%
(36,-8)*+{W_{i,2}};
(13,13); (-36, -3); **\dir{-};
(30,0); (-19, -16); **\dir{-};
(13,13); (30,0); **\dir{-};
(-36,-3); (-19,-16); **\dir{-};%
(-35,-8)*+{W_{i,1}};
\endxy 
\]
\end{center}
\caption{Venn diagram with $W_{i,\ell}$ for $A_i = \{1,2,1',2'\}$ and $A_{i+1} = \{2,3,2',3'\}$}\label{figure:two-W-sets}
\end{figure}

Consider the two $d$-sets $W_{i,1} = A_i \cup (A_{i+1} \cap D_i \cap \Sigma) \setminus (A_i \cap D_i \cap \Sigma)$ and $W_{i,2} = A_i \cup (A_{i+1} \cap D_i \cap \Sigma') \setminus (A_i \cap D_i \cap \Sigma')$. See Figure~\ref{figure:two-W-sets} for an example. Let $\mathcal{W}_{i,\ell}$ contain $W_{i,\ell}$. It is easy to see that $|A_i \cap A_j| \leq d-2$ if $i \not= j$. It follows that $G$ satisfies the strong adjacency and endpoint-count properties. The diameter of $G$, which is the distance from $\mathcal{V}_1$ to $\mathcal{V}_t$ is $2(t-1)$, which is $\Omega(n^{d/4})$.
\end{proof}
This proves that subset partition graphs without the dimension reduction property do not satisfy the quasi-polynomial bound in~\cite{Kalai:Quasi-polynomial}. Since the diameter of the subset partition graphs in Theorem~\ref{theorem:general-lower} is larger than the best known bound for polytopes, this class may provide a good starting point for the strategy we present to disprove the Linear Hirsch Conjecture which we discuss in Section~\ref{section:final-remarks}.

We now show that a quadratic lower bound holds for a special class of subset partition graphs inspired by spindles, which were instrumental in Santos' disproof of the Hirsch Conjecture~\cite{Santos:CounterexampleHirsch}. A \emph{spindle} is a polytope $P$ with two special vertices $A_1$ and $A_2$ (called the \emph{apices}) such that every facet of $P$ contains exactly one of the apices. The \emph{length} of a spindle $P$ is the distance in the graph of $P$ between $A_1$ and $A_2$. We say that a $d$-dimensional spindle is \emph{long} if its length exceeds $d$. In~\cite{Santos:CounterexampleHirsch}, Santos proved that long $5$-dimensional spindles exist, and moreover, the existence of a long spindle implies the existence of a long spindle with $n=2d$, thus the Hirsch Conjecture for polytopes is false. (Recall that a \emph{Dantzig figure} is an abstract polytope $G= (\mathcal{A},E)$ in the sense of~\cite{AdlerDantzigMurty:AbstractPolytopes} where $n=2d$, together with two disjoint $d$-sets $A_1,A_2 \in \mathcal{A}$. The construction of Santos is a polytopal realization of a non-Hirsch Dantzig figure.)

The property that characterizes spindles is purely combinatorial, so we make an analogous definition for subset partition graphs. We say that a subset partition graph $G$ of $\mathcal{A}$ on the symbol set $S$ \emph{satisfies the spindle property} if there are two distinguished subsets $A_1$ and $A_2$ (called the \emph{apices}) belonging to $\mathcal{A}$, such that every symbol $s \in S$ belongs to exactly one of $A_1$ or $A_2$. The \emph{length} of a subset partition graph $G$ with the spindle property is the distance in $G$ from one apex to the other.
\begin{theorem}\label{theorem:spg-spindle-length}
Let $L(n)$ denote the maximum length of $d$-dimensional subset partition graphs on $n=2d$ symbols satisfying the strong adjacency, endpoint-count, one-subset, and spindle properties.
Let $\kappa = \frac18$. For infinitely many $n$,
\[\frac{L(n)}{n^2} \geq \kappa.\]
\end{theorem}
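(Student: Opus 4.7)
The plan is to adapt the construction from Theorem~\ref{theorem:general-lower} so that $n=2d$ and two facets of the underlying cyclic polytope serve as complementary apices. Fix $d=2m$ with $m\ge 4$, put $n=2d$ and $S=\Sigma\cup\Sigma'$ with $\Sigma=\{1,\ldots,d\}$, $\Sigma'=\{1',\ldots,d'\}$, and let $f:\Sigma\to\Sigma'$ send $s\mapsto s'$. Let $P$ be the cyclic $m$-polytope on $d$ vertices and $Q$ its polar, a simple $m$-polytope with $d$ facets labeled by $\Sigma$. By Gale's evenness criterion, the ``boundary blocks'' $Z_1:=\{1,\ldots,m\}$ and $Z_t:=\{m+1,\ldots,d\}$ are both vertices of $Q$, and they are complementary in $\Sigma$; hence the doubled sets $A_1:=Z_1\cup f(Z_1)$ and $A_t:=Z_t\cup f(Z_t)$ are disjoint with $A_1\cup A_t=S$, exactly the shape required for spindle apices.

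Next, select a Hamiltonian path $Z_1,Z_2,\ldots,Z_t$ in the $1$-skeleton of $Q$ from $Z_1$ to $Z_t$ (appealing to Hamiltonian-connectedness results for simple polytopes of Klee and others) and build the subset partition graph exactly as in the proof of Theorem~\ref{theorem:general-lower}: put $\mathcal{V}_i=\{A_i\}$ for $A_i:=Z_i\cup f(Z_i)$, and for each $i<t$ insert two intermediate singletons $\mathcal{W}_{i,1}=\{Z_{i+1}\cup f(Z_i)\}$ and $\mathcal{W}_{i,2}=\{Z_i\cup f(Z_{i+1})\}$, each joined to both $\mathcal{V}_i$ and $\mathcal{V}_{i+1}$ as in Figure~\ref{figure:almost-path-graph}. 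The one-subset property is immediate, the spindle property follows from the choice of $A_1$ and $A_t$, and endpoint-count follows exactly as in Theorem~\ref{theorem:general-lower}.

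The strong adjacency check is the subtle point: one must show that no pair of $d$-sets from distinct non-adjacent vertices of $G$ has intersection of size $d-1$, so that the edges drawn in Figure~\ref{figure:almost-path-graph} are the only ones forced by strong adjacency. Since any two sets in $\mathcal{A}$ take the form $Z_p\cup f(Z_q)$ and $Z_r\cup f(Z_s)$, their intersection has size $|Z_p\cap Z_r|+|Z_q\cap Z_s|$. Between any two distinct $A_i$ and $A_j$ this sum equals $2|Z_i\cap Z_j|$ and is therefore even, ruling out the odd value $d-1=2m-1$. The only delicate case is $|W_{i,1}\cap W_{i+1,2}|=m+|Z_i\cap Z_{i+2}|$, which equals $d-1$ precisely when $Z_i,Z_{i+1},Z_{i+2}$ form a triangle in $Q$. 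Hence the Hamiltonian path must be chosen to be \emph{$2$-induced} (no triangle on three consecutive vertices). Once this is arranged, any path in $G$ from $A_1$ to $A_t$ alternately traverses $\mathcal{V}$-vertices and $\mathcal{W}$-rungs, so its length is $2(t-1)$; the Upper Bound Theorem yields $t=f_{m-1}(C(d,m))\ge d^2/4+1$ for all $d\ge 8$, so $L(n)\ge d^2/2=n^2/8$ for infinitely many $n=2d$.

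The principal obstacle is producing the $2$-induced Hamiltonian path with the prescribed complementary endpoints in the graph of $Q$. Triangles in this graph correspond to low-degree vertices of $P$, of which there are relatively few for large $d$, so there is both intuitive and combinatorial reason to expect such a path to exist. Moreover, a strict Hamiltonian path is not actually needed: since $t$ grows much faster than $d^2$, a $2$-induced path through any $\ge d^2/4+1$ vertices of $Q$ starting at $Z_1$ and ending at $Z_t$ already suffices, leaving considerable slack in which to carry out the construction.
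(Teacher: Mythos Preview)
Your approach is genuinely different from the paper's. The paper does not use cyclic polytopes here at all: it gives a completely explicit construction on the symbol set $S=[2m]\times\{1,2\}$, defining $d$-sets $A_{a,b,c}$ indexed by triples $(a,b,c)$ with $0\le a,b<m$ and $c\in\{0,1\}$, ordered lexicographically along a bare path. The adjacency and endpoint-count properties are then verified by an explicit (and somewhat lengthy) case analysis on the indices, and the length $2m^2=n^2/8$ is read off directly. No Hamiltonicity result is invoked, and no auxiliary ``$W$'' vertices are introduced; the underlying graph is literally a path.

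Your route, by contrast, tries to recycle the Theorem~\ref{theorem:general-lower} machinery in the $n=2d$ regime. This is an attractive idea, but the proof as written has a real gap that you yourself identify: you need a $2$-induced Hamiltonian path (or at least a $2$-induced path of length $\ge d^2/4$) in the graph of the polar of $C(2m,m)$ with prescribed complementary endpoints $\{1,\ldots,m\}$ and $\{m+1,\ldots,2m\}$, and you never construct one. The reference to ``Hamiltonian-connectedness results of Klee and others'' is already shaky---Klee's paper establishes Hamiltonian \emph{paths} in duals of cyclic polytopes, not paths between arbitrary prescribed endpoints---and the $2$-induced requirement is an additional, nontrivial constraint that does not follow from any cited result. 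Your final paragraph is an honest assessment that the key step is missing; that makes this a proof \emph{sketch} with an unresolved obstacle, not a proof. (Your strong-adjacency case analysis is otherwise essentially right; note that the symmetric case $W_{i,1}$ versus $W_{i-1,2}$ also reduces to the same triangle condition.) The paper sidesteps all of this by abandoning the cyclic-polytope framework and writing down the sets by hand.
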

\begin{proof}
We define a subset partition graph $G_m = (\mathcal{V},E)$ of $\mathcal{A}$ for each $m \in \mathbb{N}$. Let $S = [2m] \times \{1,2\}$, so $n = |S| = 4m$. Define the index set
\[ I := \{(a,b,c) : 0 \leq a, b < m;\, c=0,1\} \cup \{(m,0,0)\}.\]
The triples $(a,b,c) \in I$ are totally ordered using the lexicographic order $<_{\text{lex}}$. Note that since $c \in \{0,1\}$, if $a=a'$ and $b=b'$, then $(a,b,c)$ and $(a',b',c')$ are either equal or consecutive in $<_{\text{lex}}$. For $(a,b,c) \in I$, define
\begin{align*}
A_{a,b,c} \, := 
     \,\, &\{ (i,j) : i = a+1, \ldots, a+m-b-1 \text{ and } j = 1, 2\} \\
\cup \,\, &\{ (i,j) : i = a+m-b \text{ and } j = c+1, \ldots, 2\} \\
\cup \,\, &\{ (i,j) : i = a+m-b+1 \text{ and } j = 1, \ldots, c\} \\
\cup \,\, &\{ (i,j) : i = a+m-b+2,\ldots, a+m+1 \text{ and } j = 1, 2\}.
\end{align*}
Figure~\ref{figure:typical-d-set} gives schematic pictures of typical sets $A_{a,b,c}$ in $\mathcal{A}$.  Let $\mathcal{A} := \{A_{a,b,c} : (a,b,c) \in I\}$. For all $(a,b,c) \in I$, one has $|A_{a,b,c}| = 2m$, so the dimension of $G_m$ is $d=2m=\frac{n}{2}$.
\begin{figure}[hbt]
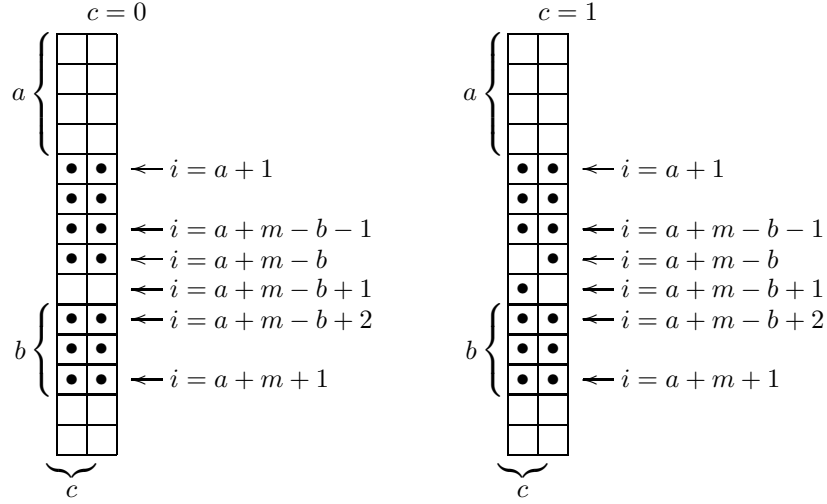

\begin{center}
\[ 
\xy 
(-52,3)*+{c=0};
(-60,0); (-60,-56) **\dir{-};
(-56,0); (-56,-56) **\dir{-};
(-52,0); (-52,-56) **\dir{-};
(-60,0); (-52, 0) **\dir{-};
(-60,-4); (-52,-4) **\dir{-};
(-60,-8); (-52,-8) **\dir{-};
(-60,-12); (-52,-12) **\dir{-};
(-60,-16); (-52,-16) **\dir{-};
(-60,-20); (-52,-20) **\dir{-};
(-60,-24); (-52,-24) **\dir{-};
(-60,-28); (-52,-28) **\dir{-};
(-60,-32); (-52,-32) **\dir{-};
(-60,-36); (-52,-36) **\dir{-};
(-60,-40); (-52,-40) **\dir{-};
(-60,-44); (-52,-44) **\dir{-};
(-60,-48); (-52,-48) **\dir{-};
(-60,-52); (-52,-52) **\dir{-};
(-60,-56); (-52,-56) **\dir{-};
(-58,-18)*+{\bullet};
(-54,-18)*+{\bullet}; 
{\ar (-46,-18)*+!L{i=a+1}; (-50,-18)*{}};
(-58,-22)*+{\bullet}; 
(-54,-22)*+{\bullet}; 
(-58,-26)*+{\bullet}; 
(-54,-26)*+{\bullet}; 
{\ar (-46,-26)*+!L{i=a+m-b-1}; (-50,-26)*{}};
(-54,-30)*+{\bullet}; 
(-58,-30)*+{\bullet};
{\ar (-46,-30)*+!L{i=a+m-b}; (-50,-30)*{}}; 
{\ar (-46,-34)*+!L{i=a+m-b+1}; (-50,-34)*{}}; 
(-58,-38)*+{\bullet}; 
(-54,-38)*+{\bullet}; 
{\ar (-46,-38)*+!L{i=a+m-b+2}; (-50,-38)*{}};
(-58,-42)*+{\bullet}; 
(-54,-42)*+{\bullet}; 
(-58,-46)*+{\bullet}; 
(-54,-46)*+{\bullet}; 
{\ar (-46,-46)*+!L{i=a+m+1}; (-50,-46)*{}};
(-59.3,-58).(-56.8,-58)!C *\frm{_\}},+U*++!U\txt{$c$};
(-62,0).(-62,-16)!C *\frm{\{},+L*++!R\txt{$a$};
(-62,-36).(-62,-48)!C *\frm{\{},+L*++!R\txt{$b$};
(8,3)*+{c=1};
(0,0); (0,-56) **\dir{-};
(4,0); (4,-56) **\dir{-};
(8,0); (8,-56) **\dir{-};
(0,0); (8, 0) **\dir{-};
(0,-4); (8,-4) **\dir{-};
(0,-8); (8,-8) **\dir{-};
(0,-12); (8,-12) **\dir{-};
(0,-16); (8,-16) **\dir{-};
(0,-20); (8,-20) **\dir{-};
(0,-24); (8,-24) **\dir{-};
(0,-28); (8,-28) **\dir{-};
(0,-32); (8,-32) **\dir{-};
(0,-36); (8,-36) **\dir{-};
(0,-40); (8,-40) **\dir{-};
(0,-44); (8,-44) **\dir{-};
(0,-48); (8,-48) **\dir{-};
(0,-52); (8,-52) **\dir{-};
(0,-56); (8,-56) **\dir{-};
(2,-18)*+{\bullet};
(6,-18)*+{\bullet}; 
{\ar (14,-18)*+!L{i=a+1}; (10,-18)*{}};
(2,-22)*+{\bullet}; 
(6,-22)*+{\bullet}; 
(2,-26)*+{\bullet}; 
(6,-26)*+{\bullet}; 
{\ar (14,-26)*+!L{i=a+m-b-1}; (10,-26)*{}};
(6,-30)*+{\bullet}; 
{\ar (14,-30)*+!L{i=a+m-b}; (10,-30)*{}}; 
(2,-34)*+{\bullet};
{\ar (14,-34)*+!L{i=a+m-b+1}; (10,-34)*{}}; 
(2,-38)*+{\bullet}; 
(6,-38)*+{\bullet}; 
{\ar (14,-38)*+!L{i=a+m-b+2}; (10,-38)*{}};
(2,-42)*+{\bullet}; 
(6,-42)*+{\bullet}; 
(2,-46)*+{\bullet}; 
(6,-46)*+{\bullet}; 
{\ar (14,-46)*+!L{i=a+m+1}; (10,-46)*{}};
(0.7,-58).(3.2,-58)!C *\frm{_\}},+U*++!U\txt{$c$};
(-2,0).(-2,-16)!C *\frm{\{},+L*++!R\txt{$a$};
(-2,-36).(-2,-48)!C *\frm{\{},+L*++!R\txt{$b$};
\endxy 
\]
\end{center}
\caption{Schematic drawings of the $d$-sets $A_{a,b,c} \in \mathcal{A}$, $m=7$ for $c=0$ and $c=1$}\label{figure:typical-d-set}
\end{figure}

The elements in $\mathcal{V}$ are the singleton sets $\mathcal{V}_{a,b,c} = \{A_{a,b,c}\}$ for $(a,b,c) \in I$, thus the one-subset property holds. Two vertices $\mathcal{V}_{a,b,c}$ and $\mathcal{V}_{a',b',c'}$ are connected by an edge if and only if the triples $(a,b,c)$ and $(a',b',c')$ are consecutive in $<_{\text{lex}}$, which is a total order on $I$. Therefore the graph for $G_m$ is a path, so the graph is connected. Hence, $G_m$ is a subset partition graph.

The $d$-sets $A_{0,0,0} = \{(i,j) : 1 \leq i \leq m; 1 \leq j \leq 2\}$ and $A_{m,0,0} = \{(i,j) : m+1 \leq i \leq 2m, 1 \leq j \leq 2\}$ are a disjoint partition of $S$, thus $A_{0,0,0}$ and $A_{m,0,0}$ are the apices of the spindle $G_m$.

We prove the adjacency property by showing that if the $d$-sets $A:=A_{a,b,c}$ and $A':=A_{a',b',c'}$ are not in adjacent vertices, then $|A \cap A'| < d-1$, or equivalently, $|A \vartriangle A'| \geq 2$. Since $(a,b,c) \not= (a',b',c')$, without loss of generality $(a,b,c) <_{\text{lex}} (a',b',c')$. For integers $\alpha,\beta,\gamma,\iota_1,\iota_2,\dots$ define 
\[ I_{\alpha,\beta,\gamma}(\iota_1,\iota_2,\dots) := | \{ (i,j) \in A_{\alpha,\beta,\gamma} : i = \iota_1,\iota_2,\dots \} |, \]
and use $I(\iota_1,\iota_2,\dots) := I_{a,b,c}(\iota_1,\iota_2,\dots)$ and $I'(\iota_1,\iota_2,\dots) := I_{a',b',c'}(\iota_1,\iota_2,\dots)$ respectively to denote the number of elements with $i \in \{\iota_1,\iota_2,\dots\}$ in $A$ and $A'$ respectively.  The proof is given in cases.
\begin{itemize}
\item Suppose $a' = a$. Since $A$ and $A'$ are not adjacent, $b' \not= b$. Either $b'-b=1$ or $b'-b>1$.
\begin{itemize}

\item 
Suppose $b'-b=1$. If $c=0$, then $I(a+m-b+1)=0$ but $I'(a+m-b+1)=I'(a'+m-b'+2)=2$. If $c=1$, then non-consecutivity of $(a,b,c)$ and $(a',b',c')$ implies $c'=1$, hence both $(a+m-b-1,1)$ and $(a+m-b,2)$ are in $A$ but neither are in $A'$. In either case $|A \vartriangle A'| \geq 2$.

\item 
If $b'-b>1$, then $I(a+m-b,a+m-b+1) = 2$ but $I'(a+m-b,a+m-b+1) = 4$, so $|A \vartriangle A'| \geq 2$.
\end{itemize}

\item Otherwise, $a' \not= a$. Exactly one of the following holds:
\begin{enumerate}
\item $a' - a > 1$,
\item $a' - a = 1$ and $b < m-1$,
\item $a' - a = 1$ and $b = m-1$ and $c=0$,
\item $a' - a = 1$ and $b = m-1$ and $c=1$ and $b' > 0$, or
\item $a' - a = 1$ and $b = m-1$ and $c=1$ and $b' = 0$.
\end{enumerate}
In the first three cases, $I(a+1)=2$ but $I'(a+1)=0$. In the fourth case, $I'(a'+m+1)=2$ but $I(a'+m+1)=0$. In the last case, non-consecutivity of $(a,b,c)$ and $(a',b',c')$ implies $c'=1$. Then $(a+1,2)$ and $(a+m+1,1)$ are in $A$, but neither are in $A'$. In all cases, $|A \vartriangle A'| \geq 2$.
\end{itemize}
Since non-adjacent $d$-sets $A$ and $A'$ have strictly less than $d-1$ symbols in common, the adjacency property holds. Moreover, it is clear from the definition of $A_{a,b,c}$ that if $A$ and $A'$ are adjacent then $|A \cap A'| = d-1$, so strong adjacency holds. Thus endpoint-count holds since the degree of each vertex in $G_m$ is one or two.

The underlying graph of $G_m$ is a path and its length, from $A_{0,0,0}$ to $A_{m,0,0}$, is $2m^2 = \frac18n^2$, thus $\limsup_{n \rightarrow \infty} \frac{L(n)}{n^{2}} \geq 2^{-3}$.
\end{proof}
Since the length is a lower bound for the diameter:
\begin{corollary}\label{corollary:superlinear-bound}
Let $M(n)$ denote the maximum diameter of $d$-dimensional subset partition graphs on $n=2d$ symbols satisfying the strong adjacency, endpoint-count, and spindle properties. Then,
\[ \limsup_{n \rightarrow \infty} \frac{M(n)}{n^2} \geq \kappa = \frac18.\]
\end{corollary}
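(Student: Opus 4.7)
My plan for Corollary~\ref{corollary:superlinear-bound} is to reduce it directly to Theorem~\ref{theorem:spg-spindle-length} via two elementary observations, with no new construction required. First, I would note that the class of subset partition graphs governing $M(n)$ strictly contains the class governing $L(n)$: the corollary omits the one-subset hypothesis imposed in the theorem, so every graph admissible as a witness for $L(n)$ is admissible as a witness for $M(n)$. Second, by definition the length of a spindle is the distance in $G$ between its two apices, whereas the diameter is the maximum distance over all pairs of vertices in $G$; since the apices are themselves vertices, length is always a lower bound for diameter. These two facts together give $M(n) \geq L(n)$ for every $n = 2d$ to which both quantities apply.

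Combining the previous paragraph with Theorem~\ref{theorem:spg-spindle-length}, I would conclude that for each $m \in \mathbb{N}$ the graph $G_m$ constructed in the proof of the theorem is admissible for $M(n)$ with $n = 4m$, and its diameter is at least its length, namely $2m^2 = n^2/8$. Taking $\limsup$ first along the subsequence $n = 4m$ and then using the fact that the $\limsup$ over all $n$ is at least the $\limsup$ along any subsequence yields
\[ \limsup_{n \to \infty} \frac{M(n)}{n^2} \geq \frac{1}{8} = \kappa, \]
which is the stated inequality. I expect no genuine obstacle: the substantive work, i.e.\ building the explicit family $G_m$ and verifying the strong adjacency, endpoint-count, and spindle properties together with the quadratic length bound, was already carried out in Theorem~\ref{theorem:spg-spindle-length}; the corollary is the purely formal upgrade from ``length'' to ``diameter'' together with the relaxation of dropping the one-subset condition from the class of allowed graphs.
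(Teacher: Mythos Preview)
Your proposal is correct and matches the paper's own argument: the paper simply notes that ``the length is a lower bound for the diameter,'' which, together with the implicit relaxation of dropping the one-subset condition, gives $M(n) \geq L(n)$ and hence the stated $\limsup$ bound from Theorem~\ref{theorem:spg-spindle-length}. Your write-up makes explicit both ingredients (the class containment and the length--diameter inequality), but the substance is identical.
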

Very recently, the lower bound to $M(n)$ was improved by H\"{a}hnle in~\cite{Hahnle:SPGs}.

\section{Final remarks and open problems}\label{section:final-remarks}

We saw that the Kalai-Kleitman diameter upper bound in~\cite{Kalai:Quasi-polynomial} holds for subset partition graphs satisfying dimension reduction. While we do have a lower bound for diameters of subset partition graphs with the strong adjacency and endpoint-count conditions, we ask:
\begin{problem}
Prove a non-trivial upper bound on the diameters of subset partition graphs with the strong adjacency and endpoint-count conditions.
\end{problem}
Subset partition graphs that satisfy the first main property, namely dimension reduction (see Remark~\ref{remark:spg-lower-bound-from-clf}), or the last two main properties, namely strong adjacency and endpoint-count (see Theorem~\ref{theorem:general-lower}), have superlinear diameter. Both of these results, combined with the fact that complementary sets of conditions are used, can be considered evidence against the Linear Hirsch Conjecture. Theorem~\ref{theorem:spg-spindle-length}, which presents a quadratic diameter lower bound for a special subclass provides even further evidence against the Linear Hirsch Conjecture. In light of this, we ask:
\begin{problem}
Construct a family of subset partition graphs with superlinear diameter satisfying all of the main properties.
\end{problem}
In fact, subset partition graphs provide an approach for satisfying all three properties and, moreover, an approach for disproving the Linear Hirsch Conjecture:
\begin{enumerate}
\item Start with a family of subset partition graphs satisfying at least the endpoint-count property with superlinear diameter growth, such as the family resulting from either Theorems~\ref{theorem:general-lower} or~\ref{theorem:spg-spindle-length}.
\item Gain the other main properties that do not yet hold with the contraction and edge addition operations (see Remark~\ref{remark:effect-of-operations}).
\item If the resulting family of graphs still has superlinear diameter, realize the sequence of graphs as a sequence of polytopes.
\end{enumerate}
We identify the principal difficulties with this strategy: in the second step the contraction and edge addition operations are liable to significantly reduce the diameter of the subset partition graphs, and in the third step the realization problem for polytopes and the study of polytopality of graphs is still the subject of ongoing research, e.{}g.{},~\cite{Joswig:NeighborlyCubical}, \cite{Matschke:Prodsimplicial}, \cite{Pfeifle:PolytopalityCartesian}, \cite{Pilaud:MultiPseudoTriangulationsRealization}, \cite{Ziegler:ProjectedProductsPolygons}.

In light of these difficulties, for the purpose of the above approach, we note that \emph{any} superlinear construction of subset partition graphs satisfying the endpoint-count property is useful, since the method attempts to construct polytopes using some construction of subset partition graphs as a starting point: while the bound in Theorem~\ref{theorem:general-lower} is much better than the one in Theorem~\ref{theorem:spg-spindle-length}, it could be that steps 2 and 3 above are easier to perform from the construction in Theorem~\ref{theorem:spg-spindle-length}. Thus, any superlinear construction of subset partition graphs satisfying endpoint-count is interesting.

By considering different combinations of properties, subset partition graphs provide a framework for describing which conditions are crucial in proving upper and lower bounds. It is natural to ask which combination of properties is most useful in combinatorial abstractions for superlinear lower bounds:
\begin{question}
Are there superlinear lower bounds for subset partition graphs satisfying other non-trivial combinations of properties?
\end{question}
Finally, is a certain combination of properties sufficient for proving the Polynomial Hirsch Conjecture for subset partition graphs, and thus, for polyhedra?

\section*{Acknowledgments}
I thank Jes\'us A. De Loera, Nicolai H\"ahnle, Frederik von Heymann, Gil Kalai, Vincent Pilaud, Frank Vallentin, and G\"unter M. Ziegler for their remarks. I am especially grateful to Francisco Santos for the in-depth suggestions. I express my gratitude to the anonymous referees for their helpful comments.

\noindent{\small Edward D. Kim -- {\tt edward.kim.math@gmail.com}\\
Delft Institute of Applied Mathematics\\
Technische Universiteit Delft\\
Mekelweg 4\\
2628 CD  Delft\\
The Netherlands}

\end{document}